\def\BibTeX{{\rm B\kern-.05em{\sc i\kern-.025em b}\kern-.08em
    T\kern-.1667em\lower.7ex\hbox{E}\kern-.125emX}}
\newtheorem{theorem}{Theorem}
\newtheorem{remark}{Remark}
\newcommand{\differential}{{\rm{d}}}
\renewcommand{\det}{{\mathrm{det}}}
\newcommand{\RNum}[1]{\uppercase\expandafter{\romannumeral #1\relax}}
\title{\LARGE\textbf{
Optimal Mass Transport over the Euler Equation}
}
\author{Charlie Yan, Iman Nodozi, Abhishek Halder
\thanks{Charlie Yan and Iman Nodozi are with the Department of Electrical and Computer Engineering, University of California, Santa Cruz, CA 95064, USA,
        {\tt\small{\{cyan140,inodozi\}@ucsc.edu}}.\\
        Abhishek Halder is with the Department of Applied Mathematics, University of California, Santa Cruz, CA 95064, USA,
        {\tt\small{ahalder@ucsc.edu}}.\\
        This work was partially supported by NSF grant 2112755.
}}
\begin{document}

\maketitle
\pagenumbering{gobble}

\bstctlcite{IEEEexample:BSTcontrol} 

\begin{abstract}
We consider the finite horizon optimal steering of the joint
state probability distribution subject to the angular velocity
dynamics governed by the Euler equation. The problem and its solution
amounts to controlling the spin of a rigid body via feedback, and is
of practical importance, for example, in angular stabilization of a
spacecraft with stochastic initial and terminal states. We clarify how
this problem is an instance of the optimal mass transport (OMT)
problem with bilinear prior drift. We deduce both static and dynamic
versions of the Eulerian OMT, and provide analytical and numerical
results for the synthesis of the optimal controller.
\end{abstract}


\section{Introduction}\label{sec:introduction}
The controlled angular velocity dynamics for a rotating rigid body such as a spacecraft, is given by the well-known Euler equation
\begin{align}
\bm{J}\dot{\bm{\omega}} = -[\bm{\omega}]^{\times}\bm{J\omega} + \bm{\tau},
\label{EulerEqn}
\end{align}
where the positive diagonal matrix $\bm{J}:={\rm{diag}}(J_1,J_2,J_3)$ comprises of the principal moments of inertia, the vector $\bm{\omega}:=(\omega_1,\omega_2,\omega_3)^{\top}\in\mathbb{R}^{3}$ denotes the body's angular velocity (in rad/s) along its principal axes, the vector $\bm{\tau}:=(\tau_1,\tau_2,\tau_3)^{\top}\in\mathbb{R}^{3}$ denotes the torque input applied about the principal axes, and $$[\bm{\omega}]^{\times}:=\begin{pmatrix}
0 & -\omega_3 & \omega_2\\
\omega_3 & 0 & -\omega_1\\
-\omega_2 & \omega_1 & 0
\end{pmatrix}
\in\mathfrak{so}(3).$$
As usual, $\mathfrak{so}(3)$ denotes the Lie algebra of the three dimensional rotation group ${\rm{SO}}(3)$. Motivated by the problem of steering the probabilistic uncertainties in angular velocities over a prescribed time horizon, we consider the deterministic and stochastic variants of the optimal mass transport (OMT) \cite{benamou2000computational,villani2003topics,villani2009optimal} over the Euler equation, which we refer to as the OMT-EE. 

Specifically, let $\mathcal{P}_2(\mathbb{R}^3)$ denote the manifold of probability measures supported on $\mathbb{R}^3$ with finite second moments. Given two probability measures $\mu_0,\mu_T\in\mathcal{P}_2(\mathbb{R}^3)$, the \emph{deterministic} OMT-EE associated with \eqref{EulerEqn} is a stochastic optimal control problem:
\begin{subequations}
\begin{align}
&\underset{\bm{u}\in\mathcal{U}}{\inf}\int_{0}^{T}\mathbb{E}_{\mu^{\bm{u}}}\left[q(\bm{x}^{\bm{u}}) + r(\bm{u})\right]\differential t \label{dOMTobj}\\
&\text{subject to}\quad\dot{\bm{x}}^{\bm{u}} = \bm{\alpha}\odot\bm{f}(\bm{x}^{\bm{u}}) + \bm{\beta}\odot\bm{u}, \; i\in\llbracket 3\rrbracket:=\{1,2,3\}, \label{dOMTdyn}\\
&\mu^{\bm{u}}(\bm{x}^{\bm{u}},t=0)=\mu_0\:\text{(given)}, \: \mu^{\bm{u}}(\bm{x}^{\bm{u}},t=T)=\mu_T\:\text{(given)}, \label{InitialAndTerminalPDF}
\end{align}
\label{detOMTEE} 
\end{subequations}
where the fixed time horizon is $[0,T]$ for some prescribed $T>0$, and $\mathbb{E}_{\mu^{\bm{u}}}\left[\cdot\right]$ denotes the expectation w.r.t. the controlled state probability measure $\mu^{\bm{u}}(\bm{x}^{\bm{u}},t)$ for $t\in[0,T]$, i.e., $\mathbb{E}_{\mu^{\bm{u}}}\left[\cdot\right] := \int (\cdot)\:\differential\mu^{\bm{u}}$. The superscript $\bm{u}$ for a variable indicates that variable's dependence on the choice of control $\bm{u}$. The symbol $\odot$ denotes the elementwise (Hadamard) vector product. 

The correspondence between \eqref{EulerEqn} and \eqref{dOMTdyn} follows by noting that the controlled state $\bm{x}^{\bm{u}}\equiv\:\text{controlled}\;\bm{\omega}$, the control $\bm{u}\equiv\bm{\tau}$, the vector field 
\begin{align}
\bm{f}(\bm{z}):=(z_2 z_3, z_3 z_1, z_1 z_2)^{\top}\,\text{for}\;\bm{z}\in\mathbb{R}^3,
\label{DriftVectorField}    
\end{align}
and the parameter vectors $\bm{\alpha},\bm{\beta}\in\mathbb{R}^3$ have entries 
\begin{align}
    \alpha_i := (J_{i+1\mod 3} - J_{i+2\mod 3})/J_i, \; \beta_i := 1/J_i, \; i\in\llbracket 3\rrbracket.
    \label{parameterVector}
\end{align}
The cost-to-go in \eqref{dOMTobj} comprises of an additive state cost $q(\cdot)$, and a strictly convex and superlinear (i.e., 1-coercive) control cost $r(\cdot)$. Of particular interest is the case $q(\cdot)\equiv 0$ and $r(\cdot)=\frac{1}{2}\|\cdot\|_2^2$ which corresponds to minimum effort control. We suppose that $q+r$ is lower bounded.

Let $\Omega$ be the space of continuous functions $\eta:[0,T]\mapsto\mathbb{R}^3$, which is a complete separable metric space endowed with the topology of uniform convergence on compact time
intervals. With $\Omega$, we associate the $\sigma$-algebra $\mathscr{F}=\sigma\{\eta(s)\mid 0\leq s \leq T\}$, and consider the complete filtered probability space $\left(\Omega,\mathscr{F},\mathbb{P}\right)$ with filtration $\mathscr{F}_t=\sigma\{\eta(s)\mid 0\leq s \leq t\leq T\}$. So, $\mathscr{F}_0$ contains all $\mathbb{P}$-null sets and $\mathscr{F}_t$ is right continuous. The stochastic initial condition $\bm{x}^{u}(t=0)$ in \eqref{detOMTEE} is $\mathscr{F}_0$ measurable. For a given control policy $\bm{u}$, the controlled state $\bm{x}^{u}(t)$ is $\mathscr{F}_t$-adapted (i.e., non-anticipating) for all $t\in[0,T]$.

In \eqref{detOMTEE}, the set of feasible Markovian control policies 
\begin{align}
\mathcal{U}:=\{\bm{u} : \mathbb{R}^3 \times [0,T] \mapsto \mathbb{R}^{3} &\mid \!\!\int_{0}^{T}\!\!\mathbb{E}_{\mu^{\bm{u}}}\left[r(\bm{u})\right] \differential t < \infty\}.
\label{FeasibleControl}    
\end{align}
Thus, solving \eqref{detOMTEE} amounts to designing an admissible Markovian control policy $\bm{u}\in\mathcal{U}$ that transfers the stochastic angular velocity state from a prescribed initial to a prescribed terminal probability measure under the controlled sample path dynamics constraint \eqref{dOMTdyn}, and hard deadline constraint. The initial and terminal measures can be interpreted as the estimated and allowable statistical uncertainty specifications, respectively, and therefore, problem \eqref{detOMTEE} asks to directly control or reshape uncertainties in a nonparametric sense \cite{chen2021controlling}. The objective of this paper is to study problem \eqref{detOMTEE}, and its \emph{stochastic} version where \eqref{dOMTdyn} may have additive process noise (discussed in Sec. \ref{sec:stocOMTEE}).


\subsubsection*{Contributions}\label{Seccontrib}
This work makes the following specific contributions.
\begin{itemize}
    
    \item We clarify the connections and differences of the OMT over the Euler equation vis-\`{a}-vis the classical OMT, from both static and dynamic perspectives.
    
    \item We present a numerical method to solve the minimum energy steering problem via neural networks with Sinkhorn losses for the probability density function (PDF)-level nonparametric boundary conditions.
\end{itemize}

\subsubsection*{Organization}\label{subsec:organization}
This paper is organized as follows. After reviewing the OMT preliminaries in Sec. \ref{Sec:OMT}, we study the deterministic OMT-EE in Sec. \ref{sec:detOMTEE}, i.e., when the Euler equation has no additive process noise. Sec. \ref{sec:uncontrolledPDF} discusses the unforced PDF evolution subject to the (deterministic) Euler equation. In Sec. \ref{sec:stocOMTEE}, we focus on the minimum energy OMT-EE subject to the Euler equation with additive process noise, and in Sec. \ref{secPINN} that follows, we detail a neural network framework to solve the corresponding necessary conditions of optimality. Sec. \ref{sec:numerical} provides numerical simulation results for a case study. Sec. \ref{sec:conclusions} concludes the paper.


\subsubsection*{Related Works}\label{subsec:relatedworks}
Continuous time deterministic optimal control subject to the angular velocity dynamics given by the Euler equation, has been studied in several prior works. In the finite horizon setting, Athans et. al. \cite{athans1963time} derived the minimum time, minimum fuel (assuming free terminal time) and minimum energy (assuming fixed terminal time) controllers--all steering an arbitrary initial angular velocity vector to zero. Considering free terminal state and no terminal cost, Kumar \cite{kumar1965optimum} showed that a tangent hyperbolic feedback is optimal for finite horizon problem w.r.t. quadratic state and quadratic control cost-to-go. Again considering free terminal state, Dwyer \cite{dwyer1982control} derived the optimal finite horizon controller w.r.t. quadratic state and quadratic control cost-to-go, as well as quadratic terminal cost. Infinite horizon optimal control problem w.r.t. quadratic state and control objective was studied in \cite{tsiotras1998optimal}.

Besides control design, systems-theoretic properties for \eqref{EulerEqn} are known too. Thanks to the periodicity of unforced motion, \eqref{EulerEqn} enjoys global controllability guarantees and it is known \cite[Thm. 4 and disussions thereafter]{brockett1976nonlinear}, \cite[Reamrk in p. 895]{baillieul1980geometry} that the controlled dynamics is reachable on entire $\mathbb{R}^{3}$. See also \cite{crouch1984spacecraft}.

Formulating and solving the OMT with prior dynamics is a relatively recent endeavor, see e.g., \cite{chen2016optimal,elamvazhuthi2018optimal,caluya2020finite,ito2022sinkhorn}. To the best of the authors' knowledge, OMT over the Euler equation has not been investigated before.


\subsubsection*{Notations}\label{SecNotations} We use boldfaced small letters for vectors and boldfaced capital letters for matrices. When a probability measure $\mu$ is absolutely continuous, it admits a PDF $\rho$, and $\differential\mu(\cdot)=\rho\differential(\cdot)$. We use $\sharp$ to denote the pushforward of a probability measure or PDF (when the measure is absolutely continuous). The symbol $\sim$ is used as a shorthand for ``follows the probability distribution". We use $\langle\cdot,\cdot\rangle$, $\nabla$ and $\Delta$ to denote the standard Euclidean inner product, the Euclidean gradient and the Euclidean Laplacian, respectively. In case of potential confusion, we put a subscript to $\nabla$ to clarify w.r.t. which variable the gradient is being taken; otherwise we omit the subscript. We use $\mathcal{N}\left(\bm{m},\bm{\Sigma}\right)$ to denote a joint normal PDF with mean vector $\bm{m}$ and covariance matrix $\bm{\Sigma}$. The symbol $\bm{I}_3$ denotes the $3\times 3$ identity matrix.

\section{OMT Preliminaries}\label{Sec:OMT}
To ease the ensuing development, we now summarize rudiments on classical OMT. Well-known references for this topic are \cite{villani2003topics,villani2009optimal}; for a brief summary see e.g., \cite{halder2014geodesic}.

The \emph{static formulation} of OMT goes back to Gaspard Monge in 1781, which concerns with finding a mass preserving transport map $\bm{\theta}:\mathbb{R}^{n}\mapsto\mathbb{R}^{n}$ pushing a given measure $\mu_0$ to another $\mu_T$ while minimizing a transportation cost $\int_{\mathbb{R}^{n}}c(\bm{x},\bm{\theta}(\bm{x}))\differential\mu_0$ where $c$ is some ground cost functional. A common choice for $c$ is half of the squared Euclidean distance, but in general, the choice of the functional $c$ plays an important role for guaranteeing existence-uniqueness of the minimizer $\bm{\theta}^{\rm{opt}}(\cdot)$.

Even when the existence-uniqueness of the \emph{optimal transport map} $\bm{\theta}^{\rm{opt}}(\cdot)$ can be guaranteed, Monge's formulation requires solving a nonlinear nonconvex problem over all measurable pushforward mappings $\bm{\theta}:\mathbb{R}^{n}\mapsto\mathbb{R}^{n}$ taking $\mu_0$ to $\mu_T$. For $c(\bm{x},\bm{y})\equiv \frac{1}{2}\|\bm{x}-\bm{y}\|_2^2$, $\bm{x},\bm{y}\in\mathbb{R}^{n}$, $\differential\mu_0(\bm{x})=\rho_0(\bm{x})\differential\bm{x}, \differential\mu_T(\bm{x})=\rho_T(\bm{y})\differential\bm{y}$, it is known \cite{brenier1991polar} that $\bm{\theta}^{\rm{opt}}$ exists, is unique, and admits a representation $\bm{\theta}^{\rm{opt}} = \nabla\psi$ for some convex function $\psi$. Even then, the direct computation of $\psi$ is numerically challenging because it reduces to solving a second order nonlinear elliptic Monge-Amp\`{e}re PDE \cite[p. 126]{villani2003topics}: $\det\left(\nabla^2\psi(\bm{x})\right)\rho_{T}(\nabla\psi(\bm{x}))=\rho_0(\bm{x})$, where $\det$ and $\nabla^2$ denote the determinant and the Hessian, respectively.

A more tractable reformulation of the static OMT is due to Leonid Kantorovich in 1942 \cite{kantorovich1942translocation}, which instead of finding the optimal transport map $\bm{\theta}^{\rm{opt}}$, seeks to compute an \emph{optimal coupling} $\pi^{\rm{opt}}$ between the given measures $\mu_0,\mu_T$ that solves
\begin{align}
\underset{\pi\in\Pi_2(\mu_0,\mu_T)}{\arg\inf}\int_{\mathbb{R}^{n}\times\mathbb{R}^{n}}c(\bm{x},\bm{y})\differential\pi(\bm{x},\bm{y})
\label{KantorovichLP}
\end{align}
where $\Pi_2(\mu_0,\mu_T)$ denotes the set of all joint probability measures $\pi$ supported over the product space $\mathbb{R}^{n}\times\mathbb{R}^{n}$ with $\bm{x}$ marginal $\mu_0$, and $\bm{y}$ marginal $\mu_T$. Notice that \eqref{KantorovichLP} is an infinite dimensional linear program. The map $\bm{\theta}^{\rm{opt}}$ is precisely the support of the optimal coupling $\pi^{\rm{opt}}$. In the other direction, we can recover $\pi^{\rm{opt}}$ from $\bm{\theta}^{\rm{opt}}$ as $\pi^{\rm{opt}} = \left({\rm{Id}}\times\bm{\theta}^{\rm{opt}}\right)\:\sharp\:\mu_0$ where ${\rm{Id}}$ denotes the identity map.

The \emph{dynamic formulation} of OMT due to Benamou and Brenier \cite{benamou2000computational} appeared at the turn of the 21st century. When $c(\bm{x},\bm{y})\equiv \frac{1}{2}\|\bm{x}-\bm{y}\|_2^2$ and $\mu_0,\mu_T$ admit respective PDFs $\rho_0,\rho_T$, the dynamic formulation is the following stochastic optimal control problem:
\begin{subequations}
\begin{align}
&\underset{(\rho^{\bm{u}},\bm{u})\in\mathcal{P}_2(\mathbb{R}^n)\times\mathcal{U}}{\arg\inf}\int_{0}^{T}\int_{\mathbb{R}^{n}}\frac{1}{2}\|\bm{u}\|_2^2\:\rho^{\bm{u}}(\bm{x}^{\bm{u}},t)\differential\bm{x}^{\bm{u}}\differential t\label{BBobj}\\
&\qquad\dfrac{\partial\rho^{\bm{u}}}{\partial t} + \nabla_{\bm{x}^{\bm{u}}}\cdot\left(\rho^{\bm{u}}\bm{u}\right) = 0,\label{BBdyn}\\
&\qquad\rho^{\bm{u}}(\bm{x}^{\bm{u}},t=0) = \rho_0, \quad \rho^{\bm{u}}(\bm{x}^{\bm{u}},t=T) = \rho_T.\label{BBendpoints}
\end{align}
\label{BenamouBrenierOMT}  
\end{subequations}
The constraint \eqref{BBdyn} is the \emph{Liouville PDE} (see e.g., \cite{halder2011dispersion}) that governs the evolution of the state PDF $\rho^{\bm{u}}(\bm{x}^{\bm{u}},t)$ under a feasible control policy $\bm{u}\in\mathcal{U}$. So \eqref{BenamouBrenierOMT} is a problem of optimally steering a given joint PDF $\rho_0$ to another $\rho_T$ over time horizon $[0,T]$ using a vector of single integrators, i.e., with full control authority in $\mathcal{U}$. The solution $(\rho^{\rm{opt}},\bm{u}^{\rm{opt}})$ for \eqref{BenamouBrenierOMT} satisfies
\begin{subequations}
\begin{align}
&\rho^{\rm{opt}}(\bm{x}^{\bm{u}},t) = \bm{\theta}_t \:\sharp\: \rho_0, \quad \bm{\theta}_t := \!\left(\!1-\frac{t}{T}\!\right)\!{\rm{Id}} + \frac{t}{T}\bm{\theta}^{\rm{opt}}, \label{rhoopt}\\
&\bm{u}^{\rm{opt}}(\bm{x}^{\bm{u}},t) = \nabla_{\bm{x}^{\bm{u}}}\phi(\bm{x}^{\bm{u}},t), \quad \dfrac{\partial\phi}{\partial t} + \frac{1}{2}\|\nabla_{\bm{x}^{\bm{u}}}\phi\|_{2}^{2} = 0. \label{uopt}
\end{align}
\label{BBsolution}
\end{subequations}
Thus, \eqref{rhoopt} tells that the optimally controlled PDF is obtained as pushforward of the initial PDF via a map that is a linear interpolation between identity and the optimal transport map. Consequently, the PDF $\rho^{\rm{opt}}$ itself is a (nonlinear) McCann's displacement interpolant \cite{mccann1997convexity} between $\rho_0$ and $\rho_T$. The optimal control in \eqref{uopt} is obtained as the gradient of the solution of a \emph{Hamilton-Jacobi-Bellman (HJB) PDE}.

The $\psi(\bm{x})$ in static OMT and the $\phi(\bm{x},t)$ in dynamic OMT are related \cite[Thm. 5.51]{villani2003topics} through the \emph{Hopf-Lax representation formula}
\begin{subequations}
\begin{align}
\phi(\bm{x},t) &= \!\underset{\bm{y}\in\mathbb{R}^n}{\inf}\!\left(\phi(\bm{y},0) + \dfrac{1}{2t}\|\bm{x}-\bm{y}\|_2^2\right), \: t\in(0,T],\\
\phi(\bm{y},0) &= \psi(\bm{y}) -  \frac{1}{2}\|\bm{y}\|_2^2,
\end{align}
\label{HopfLax}
\end{subequations}
i.e., $\phi(\bm{x},t)$ is the Moreau-Yosida proximal envelope \cite[Ch. 3.1]{parikh2014proximal} of $\phi(\bm{y},0) = \psi(\bm{y}) - \frac{1}{2}\|\bm{y}\|_2^2$, and hence $\phi(\bm{x},t)$ is continuously differentiable w.r.t. $\bm{x}\in\mathbb{R}^{n}$.

Classical OMT allows defining a distance metric, called the \emph{Wasserstein metric} $W$, on the manifold of probability measures or PDFs. In particular, when $c(\bm{x},\bm{y})\equiv \frac{1}{2}\|\bm{x}-\bm{y}\|_2^2$, the infimum value achieved in \eqref{KantorovichLP} is the one half of the squared Wasserstein metric between $\mu_0$ and $\mu_T$, i.e.,
\begin{align}
W^{2}\!\left(\mu_0,\mu_T\right)\! := \!\!\underset{\pi\in\Pi_2(\mu_0,\mu_T)}{\inf}\!\!\int_{\mathbb{R}^{n}\times\mathbb{R}^{n}}\!\!\!\!\|\bm{x}-\bm{y}\|_2^2\: \differential\pi(\bm{x},\bm{y}),
\label{DefWassContinuous}    
\end{align}
which is also equal to the infimum value achieved in \eqref{BenamouBrenierOMT}, provided $\mu_0,\mu_T$ are absolutely continuous. The tuple $\left(\mathcal{P}_2\left(\mathbb{R}^{n}\right),W\right)$ defines a complete separable metric space, i.e., a polish space. This offers a natural way to metrize the topology of weak convergence of probability measures w.r.t. the metric $W$.

For a regularization parameter $\varepsilon>0$, we refer to the entropy-regularized version of \eqref{DefWassContinuous} as \emph{Sinkhorn divergence}
\begin{align}
W_{\varepsilon}^{2}\left(\mu_0,\mu_T\right) := &\underset{\pi\in\Pi_2(\mu_0,\mu_T)}{\inf}\int_{\mathbb{R}^{n}\times\mathbb{R}^{n}}\big\{\|\bm{x}-\bm{y}\|_2^2 \nonumber\\
&\qquad\qquad+\varepsilon\log\pi(\bm{x},\bm{y})\big\}\differential\pi(\bm{x},\bm{y}).
\label{DefSinkDiv}    
\end{align}
As $\varepsilon\downarrow 0$, the Sinkhorn divergence \eqref{DefSinkDiv} approaches the Wasserstein metric \eqref{DefWassContinuous}. 

\section{Deterministic OMT-EE}\label{sec:detOMTEE}
We suppose that the endpoint measures $\mu_0,\mu_T\in\mathcal{P}_2(\mathbb{R}^3)$ in \eqref{detOMTEE} are absolutely continuous with respective PDFs $\rho_0,\rho_T$, and rewrite \eqref{detOMTEE} as
\begin{subequations}
\begin{align}
&\underset{(\rho^{\bm{u}},\bm{u})\in\mathcal{P}_2(\mathbb{R}^3)\times\mathcal{U}}{\arg\inf}\int_{0}^{T}\!\!\int_{\mathbb{R}^{3}}\!\!\left(q(\bm{x}^{\bm{u}})+r(\bm{u})\right)\:\rho^{\bm{u}}(\bm{x}^{\bm{u}},t)\differential\bm{x}^{\bm{u}}\differential t\label{detOMTEErhoobj}\\
&\qquad\dfrac{\partial\rho^{\bm{u}}}{\partial t} + \nabla_{\bm{x}^{\bm{u}}}\cdot\left(\rho^{\bm{u}}\left(\bm{\alpha}\odot\bm{f}(\bm{x}^{\bm{u}}) + \bm{\beta}\odot\bm{u}\right)\right) = 0,\label{detOMTEErhodyn}\\
&\qquad\rho^{\bm{u}}(\bm{x}^{\bm{u}},t=0) = \rho_0, \quad \rho^{\bm{u}}(\bm{x}^{\bm{u}},t=T) = \rho_T.\label{detOMTEErhoendpoints}
\end{align}
\label{detOMTEErho}  
\end{subequations}
Problem \eqref{detOMTEErho} generalizes \eqref{BenamouBrenierOMT} in two ways. \emph{First}, unlike \eqref{BBdyn}, the constraint \eqref{detOMTEErhodyn} has a prior nonlinear drift $\bm{f}$ given by \eqref{DriftVectorField}. \emph{Second}, the cost \eqref{detOMTEErhoobj} is more general than \eqref{BBobj}. We refer to \eqref{detOMTEErho} as the \emph{dynamic OMT-EE}. 

We clarify here that the solution of the Liouville PDE \eqref{detOMTEErhodyn} is understood in the weak sense, i.e., for all compactly supported smooth test functions $\zeta(\bm{x}^{\bm{u}},t)\in C_{c}^{
\infty}\left([0,T]\times\mathbb{R}^3\right)$, the function $\rho^{\bm{u}}(\bm{x}^{\bm{u}},t)$ satisfies $\int_{0}^{T}\int_{\mathbb{R}^{3}}\left(\rho^{\bm{u}}\frac{\partial\zeta}{\partial t} + \rho^{\bm{u}}\langle\bm{\alpha}\odot\bm{f}(\bm{x}^{\bm{u}}) + \bm{\beta}\odot\bm{u},\nabla_{\bm{x}^{\bm{u}}}\zeta\rangle\right)\differential\bm{x}^{\bm{u}}\differential t + \int_{\mathbb{R}^{3}}\rho_0(\bm{x}^{\bm{u}})\zeta(\bm{x}^{\bm{u}},t=0)\differential\bm{x}^{\bm{u}} = 0$. 

\subsection{Static OMT-EE}
At this point, a natural question arises: if \eqref{detOMTEErho} is the Euler equation generalization of the dynamic OMT \eqref{BenamouBrenierOMT}, then what is the corresponding generalization of the static OMT \eqref{KantorovichLP}? 

To answer this, we slightly generalize the setting: we replace $\mathbb{R}^{n}$ in \eqref{KantorovichLP} with an $n$ dimensional Riemannian manifold $\mathcal{M}$. Consider an absolutely continuous curve $\bm{\gamma}(t)\in\mathcal{M}$, $t\in[0,T]$, and $(\bm{\gamma},\dot{\bm{\gamma}})\in\mathcal{TM}$ (tangent bundle). Then for $\bm{x},\bm{y}\in\mathcal{M}$, we think of $c(\bm{x},\bm{y})$ in \eqref{KantorovichLP} to be derived from a Lagrangian $L:[0,T]\times\mathcal{TM}\mapsto\mathbb{R}$, i.e., express $c$ as an action integral
\begin{align}
c(\bm{x},\bm{y}) = \underset{\bm{\gamma}(\cdot)\in\Gamma_{\bm{xy}}}{\inf}\displaystyle\int_{0}^{T}L(t,\bm{\gamma}(t),\dot{\bm{\gamma}}(t))\:\differential t,   
\label{ActionIntegral}    
\end{align}
where 
\begin{align*}
\Gamma_{\bm{xy}}:=\{\bm{\gamma}:[0,T]\mapsto \mathbb{R}^{n} \mid &\, \bm{\gamma}(\cdot)\;\text{is absolutely continuous},\nonumber\\
&\qquad \bm{\gamma}(0)=\bm{x},\bm{\gamma}(1)=\bm{y}\}.
\end{align*}
In particular, the choice $\mathcal{M}\equiv\mathbb{R}^{n}$ and $L(t,\bm{\gamma},\dot{\bm{\gamma}})\equiv\frac{1}{2}\|\dot{\bm{\gamma}}\|_2^2$ results in $c(\bm{x},\bm{y}) = \|\bm{x}-\bm{y}\|_2^2$, i.e., the standard Euclidean OMT.

For OMT-EE, $\mathcal{M}\equiv\mathbb{R}^{3}$ and we have the Lagrangian 
\begin{align}
L(t,\bm{\gamma},\dot{\bm{\gamma}})\equiv q(\bm{\gamma})+r(\left(\dot{\bm{\gamma}} - \bm{\alpha}\odot\bm{f}\right)\oslash\bm{\beta})   
\label{LagrangianOMTEE}    
\end{align}
where $\oslash$ denotes vector element-wise (Hadamard) division. In particular, $L$ in \eqref{LagrangianOMTEE} has no explicit dependence on $t$, i.e., $L:\mathcal{TM}\mapsto\mathbb{R}$.

This identification allows us to define the \emph{static OMT-EE} as the linear program
\begin{align}
\underset{\pi\in\Pi_2(\mu_0,\mu_T)}{\arg\inf}\int_{\mathbb{R}^{3}\times\mathbb{R}^{3}}c(\bm{x},\bm{y})\differential\pi(\bm{x},\bm{y})
\label{KantorovichLPEE}
\end{align}
where $c$ is given by \eqref{ActionIntegral}-\eqref{LagrangianOMTEE}, and $\Pi_2(\mu_0,\mu_T)$ denotes the set of all joint probability measures $\pi$ supported over the product space $\mathbb{R}^{3}\times\mathbb{R}^{3}$ with $\bm{x}$ marginal $\mu_0$, and $\bm{y}$ marginal $\mu_1$. We next show that identifying \eqref{LagrangianOMTEE} also helps establish the existence-uniqueness of solution for \eqref{detOMTEErho}. 

\subsection{Back to Dynamic OMT-EE}
We have the following result for problem \eqref{detOMTEErho}.
\begin{theorem}\label{thm:existenceuniqueness}
(\textbf{Existence-uniqueness}) Let $r:\mathbb{R}^{3}\mapsto\mathbb{R}_{\geq 0}$ be strictly convex and superlinear function. Then the minimizing tuple $(\rho^{\rm{opt}},\bm{u}^{\rm{opt}})$ for problem \eqref{detOMTEErho} exists and is unique.
\end{theorem}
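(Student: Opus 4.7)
The plan is to reduce \eqref{detOMTEErho} to the static OMT-EE \eqref{KantorovichLPEE} with the Lagrangian cost \eqref{LagrangianOMTEE}, and then invoke existence--uniqueness results for OMT with strictly convex Tonelli-type ground cost. First, since $\bm{\beta}$ has strictly positive entries ($\beta_i = 1/J_i > 0$), the map $\dot{\bm{\gamma}} \mapsto (\dot{\bm{\gamma}} - \bm{\alpha}\odot\bm{f}(\bm{\gamma}))\oslash\bm{\beta}$ is an affine bijection of $\mathbb{R}^3$ for each fixed $\bm{\gamma}$. Because strict convexity and $1$-coercivity are preserved under invertible affine reparametrizations of the argument, the Lagrangian $L$ in \eqref{LagrangianOMTEE} is strictly convex and superlinear in $\dot{\bm{\gamma}}$; together with the lower boundedness of $q+r$, this fulfills the hypotheses of Tonelli's theorem in the calculus of variations. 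Hence for every $(\bm{x},\bm{y})\in\mathbb{R}^3\times\mathbb{R}^3$ the action integral \eqref{ActionIntegral} admits a minimizer in $\Gamma_{\bm{xy}}$, so the ground cost $c(\bm{x},\bm{y})$ is finite and lower semicontinuous; strict convexity in the velocity renders this minimizer unique, and it satisfies the corresponding Euler--Lagrange equation.

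Next, I would solve \eqref{KantorovichLPEE} by the direct method: $\Pi_2(\mu_0,\mu_T)$ is weakly compact by Prokhorov, and $\pi \mapsto \int c\,\differential\pi$ is weakly lower semicontinuous, so an optimal coupling $\pi^{\rm{opt}}$ exists. For uniqueness and the Monge-map representation, I would apply a Brenier-type theorem for Lagrangian costs in the style of Bernard--Buffoni (or Ambrosio--Gigli--Savar\'{e}): when $\mu_0$ is absolutely continuous and $L$ is a strictly convex Tonelli Lagrangian, the optimal plan concentrates on the graph of a unique Borel map $\bm{\theta}^{\rm{opt}}$. The nontrivial input is the \emph{twist}/left-monotonicity condition on $c$, which follows from strict convexity of $L$ in $\dot{\bm{\gamma}}$ together with the smoothness of the Euler--Lagrange flow associated with $L$.

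To obtain the dynamic optimum, define the evaluation map $e_t(\bm{x},\bm{y}) := \bm{\gamma}^{\star}_{\bm{xy}}(t)$, where $\bm{\gamma}^{\star}_{\bm{xy}}$ is the unique $L$-minimizing curve from $\bm{x}$ to $\bm{y}$, and set $\rho^{\rm{opt}}(\cdot,t) := (e_t)_{\sharp}\pi^{\rm{opt}}$. Since $\pi^{\rm{opt}}$ is supported on the graph of $\bm{\theta}^{\rm{opt}}$ and each minimizer is unique, the velocity $\dot{\bm{\gamma}}^{\star}$ is a well-defined function $\bm{v}^{\rm{opt}}(\bm{x}^{\bm{u}},t)$ of the current state along $\rho^{\rm{opt}}$-a.e.\ trajectory. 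Inverting the dynamics \eqref{dOMTdyn}, which is possible precisely because $\bm{\beta}$ has positive entries, gives the unique Markovian control $\bm{u}^{\rm{opt}} = (\bm{v}^{\rm{opt}} - \bm{\alpha}\odot\bm{f}(\bm{x}^{\bm{u}}))\oslash\bm{\beta}$; strict convexity of $r$ ensures that no other admissible control yields the same displacement at lower cost. The pair $(\rho^{\rm{opt}},\bm{u}^{\rm{opt}})$ satisfies \eqref{detOMTEErhodyn}--\eqref{detOMTEErhoendpoints} in the weak sense and attains the infimum in \eqref{detOMTEErhoobj}, and uniqueness of $\pi^{\rm{opt}}$ propagates to uniqueness of $(\rho^{\rm{opt}},\bm{u}^{\rm{opt}})$.

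I expect the main obstacle to be the rigorous verification of the twist condition and of the static--dynamic equivalence for this particular bilinear-drift Lagrangian. The Euler drift $\bm{f}$ in \eqref{DriftVectorField} is only polynomial, hence not globally Lipschitz, so some care is required to confirm that action minimizers remain in a class amenable to the Bernard--Buffoni machinery; specifically, one must check that the Euler--Lagrange flow associated with $L$ is well defined and depends continuously on endpoints on any compact subset of $\mathbb{R}^3\times\mathbb{R}^3$ carrying the mass of $\pi^{\rm{opt}}$, which is arranged by the absolute continuity and finite-second-moment hypotheses on $\mu_0,\mu_T$ together with a coercivity estimate on $L$.
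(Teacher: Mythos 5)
Your proposal is correct in substance and rests on the same crux as the paper's proof: observing that, because $\bm{\beta}$ has strictly positive entries, the Lagrangian \eqref{LagrangianOMTEE} is $r$ composed with an invertible affine map of $\dot{\bm{\gamma}}$, hence strictly convex and superlinear in $\dot{\bm{\gamma}}$, i.e., a (weak) Tonelli Lagrangian. Where you diverge is in what you do with that observation: the paper stops there and invokes a single off-the-shelf result (\cite[Thm.~1.4.2]{figalli2007optimal}) giving existence and uniqueness of the minimizing pair for the dynamic problem \eqref{detOMTEErho} directly, whereas you unpack that citation into its standard constituent steps — Tonelli existence of action minimizers defining the ground cost $c$ in \eqref{ActionIntegral}, the direct method for the Kantorovich problem \eqref{KantorovichLPEE}, a Bernard--Buffoni/twist-condition argument (using absolute continuity of $\mu_0$) for uniqueness of the optimal plan and its Monge-map structure, displacement interpolation via the evaluation maps to build $\rho^{\rm{opt}}$, and inversion of the affine control channel to recover $\bm{u}^{\rm{opt}}$. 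Your route is more self-contained and makes explicit exactly where the hypotheses enter (absolute continuity for the twist argument, positivity of $\bm{\beta}$ for control recovery, strict convexity of $r$ for uniqueness of the control), and it honestly flags the technical points the paper's citation silently absorbs: verification of the twist condition for this action cost, the static--dynamic equivalence needed to propagate uniqueness of $\pi^{\rm{opt}}$ to $(\rho^{\rm{opt}},\bm{u}^{\rm{opt}})$, and the fact that the quadratic drift $\bm{f}$ is not globally Lipschitz, so superlinearity/coercivity of $L$ holds only locally uniformly in $\bm{\gamma}$ and some care is needed for the Euler--Lagrange flow. These flagged items are genuine but are exactly the content of the cited Lagrangian-OMT machinery, so your sketch is a legitimate (if longer) alternative to the paper's two-line reduction-plus-citation; the paper's version buys brevity, yours buys transparency about the structure of the argument.
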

\begin{proof}
Since $r$ is strictly convex, so $L$ in \eqref{LagrangianOMTEE} viewed as function of $\dot{\bm{\gamma}}\in\mathbb{R}^{3}$, is strictly convex composed with an affine map. Therefore, $L$ is strictly convex in $\dot{\bm{\gamma}}$.

We next show that $L$ in \eqref{LagrangianOMTEE} is also superlinear in $\dot{\bm{\gamma}}\in\mathbb{R}^{3}$. To see this, notice that
\begin{align}
\lim_{\|\dot{\bm{\gamma}}\|_{2}\rightarrow\infty} \dfrac{L}{\|\dot{\bm{\gamma}}\|_{2}} &= \lim_{\|\dot{\bm{\gamma}}\|_{2}\rightarrow\infty} \dfrac{r(\left(\dot{\bm{\gamma}} - \bm{\alpha}\odot\bm{f}\right)\oslash\bm{\beta})}{\|\dot{\bm{\gamma}}\|_{2}}\nonumber\nonumber\\
&= \lim_{\|\bm{z}\|_{2}\rightarrow\infty}\dfrac{r(\bm{z})}{\|\bm{\alpha}\odot\bm{f} + \bm{\beta}\odot\bm{z}\|_2}.
\label{LimitToCompute}
\end{align}
Using triangle inequality: $\|\bm{\alpha}\odot\bm{f} + \bm{\beta}\odot\bm{z}\|_2 \leq \|\bm{\alpha}\odot\bm{f}\|_2 + \|\bm{\beta}\odot\bm{z}\|_2 \leq \|\bm{\alpha}\odot\bm{f}\|_2 + \|\bm{\beta}\|_\infty\|\bm{z}\|_2$, and hence
$$\dfrac{r(\bm{z})}{\|\bm{\alpha}\odot\bm{f} + \bm{\beta}\odot\bm{z}\|_2} \geq \dfrac{r(\bm{z})}{\|\bm{\alpha}\odot\bm{f}\|_2 + \|\bm{\beta}\|_\infty\|\bm{z}\|_2}.$$
Taking the limit $\|\bm{z}\|_2\rightarrow\infty$ to both sides of above, we obtain
\begin{align*}
\eqref{LimitToCompute} &\geq \lim_{\|\bm{z}\|_{2}\rightarrow\infty}\dfrac{r(\bm{z})}{\|\bm{\alpha}\odot\bm{f}\|_2 + \|\bm{\beta}\|_\infty\|\bm{z}\|_2}\\
&= \lim_{\|\bm{z}\|_{2}\rightarrow\infty}\dfrac{r(\bm{z})/\|\bm{z}\|_2}{ \|\bm{\beta}\|_\infty} = +\infty,
\end{align*}
since $r$ is superlinear, and $\|\bm{\alpha}\odot\bm{f}\|_2,\|\bm{\beta}\|_\infty > 0$. Thus, \eqref{LimitToCompute} itself equals to $+\infty$, thereby proving that $L$ is indeed superlinear.

The Lagrangian \eqref{LagrangianOMTEE} being both strictly convex and superlinear in $\dot{\bm{\gamma}}$, is a weak \emph{Tonelli Lagrangian} \cite[p. 118]{villani2009optimal}, \cite[Ch. 6.2]{figalli2007optimal}, and therefore guarantees \cite[Thm. 1.4.2]{figalli2007optimal} the existence and uniqueness of the minimizing pair $(\rho^{\rm{opt}},\bm{u}^{\rm{opt}})$ for problem \eqref{detOMTEErho}.
\end{proof}
\begin{remark}
The cost $c$ in \eqref{ActionIntegral} being derived from a weak Tonelli Lagrangian \eqref{LagrangianOMTEE}, as shown in the proof above, equivalently guarantees the existence-uniqueness of the solution for the static OMT-EE \eqref{KantorovichLPEE}.
\end{remark}

\subsection{The Case $q(\cdot)\equiv 0$, $r=\frac{1}{2}\|\cdot\|_2^2$}\label{subsec:detomteeminenergy}
A specific instance of \eqref{detOMTEErho} that is of practical interest is \emph{minimum energy angular velocity steering}, i.e., the case 
$$q(\cdot)\equiv 0, \quad r=\frac{1}{2}\|\cdot\|_2^2.$$ 
Then, \eqref{detOMTEErho} resembles the Benamou-Brenier dynamic OMT \eqref{BenamouBrenierOMT} except that the controlled Liouville PDE \eqref{detOMTEErhodyn} has a prior bilinear drift which \eqref{BBdyn} does not have.

\begin{theorem}\label{thm:FirstOrderOptimalityDynOMTEE}
(\textbf{Necessary conditions of optimality for minimum energy steering of angular velocity PDF without process noise}) The optimal tuple $(\rho^{\rm{opt}},\bm{u}^{\rm{opt}})$ solving problem \eqref{detOMTEErho} with $q(\cdot)\equiv 0$, $r=\frac{1}{2}\|\cdot\|_2^2$, satisfies the following first order necessary conditions of optimality:
\begin{subequations}
\begin{align}
&\frac{\partial \phi}{\partial t}+\frac{1}{2}\left\|\bm{\beta}\odot\nabla_{\bm{x}^{\bm{u}}}\phi\right\|_{2}^{2}+\langle\nabla_{\bm{x}^{\bm{u}}}\phi, \bm{\alpha}\odot\bm{f}(\bm{x}^{\bm{u}})\rangle = 0, \label{DetHJBPDE}\\
&\dfrac{\partial\rho^{{\rm{opt}}}}{\partial t} \!+\! \nabla_{\bm{x}^{\bm{u}}}\cdot\left(\rho^{{\rm{opt}}}\!\left(\bm{\alpha}\odot\!\bm{f}(\bm{x}^{\bm{u}}) + \bm{\beta}^{2}\odot\!\nabla_{\bm{x}^{\bm{u}}}\phi\right)\right) \!=\!0,\label{detrho}  \\
&\rho^{{\rm{opt}}}(\bm{x}^{\bm{u}},t=0) = \rho_0, \quad \rho^{{\rm{opt}}}(\bm{x}^{\bm{u}},t=T) = \rho_T,\label{detBoundaryConditions}\\
& \bm{u}^{\rm{opt}} = \bm{\beta}\odot \nabla_{\bm{x}^{\bm{u}}}\phi, \label{detOptimalControl}
\end{align}
\label{detOptimalityConditions}
\end{subequations}
where $\bm{\beta}^{2}$ denotes the vector element-wise square.
\end{theorem}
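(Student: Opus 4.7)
The result is the standard Pontryagin/Benamou--Brenier-type first order condition for a PDE-constrained optimal control, specialized to the bilinear prior drift of the Euler equation, so my plan is a Lagrangian (adjoint multiplier) argument. The idea is to adjoin the controlled Liouville PDE \eqref{detOMTEErhodyn} to the objective \eqref{detOMTEErhoobj} via a $C^1$ multiplier $\phi(\bm{x}^{\bm{u}},t)$, reduce to an unconstrained variational problem, and then extract the pointwise optimality conditions with respect to $\bm{u}$ and $\rho^{\bm{u}}$ separately. The boundary conditions \eqref{detBoundaryConditions} are simply carried over from \eqref{detOMTEErhoendpoints}; the content is in \eqref{DetHJBPDE}, \eqref{detrho}, and \eqref{detOptimalControl}.

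\textbf{Step 1 (Lagrangian and integration by parts).} Form
\begin{align*}
\mathcal{L}[\rho^{\bm{u}},\bm{u},\phi] := \int_0^T\!\!\int_{\mathbb{R}^3}\!\!\Big\{&\tfrac{1}{2}\|\bm{u}\|_2^2\,\rho^{\bm{u}} \\
+\,\phi\Big(\tfrac{\partial \rho^{\bm{u}}}{\partial t}&+\nabla_{\bm{x}^{\bm{u}}}\!\cdot(\rho^{\bm{u}}(\bm{\alpha}\odot\bm{f} + \bm{\beta}\odot\bm{u}))\Big)\!\Big\}\differential\bm{x}^{\bm{u}}\differential t.
\end{align*}
Assuming $\rho^{\bm{u}}$ has sufficient spatial decay (consistent with $\mu^{\bm{u}}\in\mathcal{P}_2(\mathbb{R}^3)$) and $\phi$ is smooth enough, integrate by parts in both $t$ and $\bm{x}^{\bm{u}}$ to transfer derivatives off $\rho^{\bm{u}}$. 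The spatial boundary terms vanish by decay, while the temporal endpoints produce $\int_{\mathbb{R}^3}\phi\rho^{\bm{u}}\differential\bm{x}^{\bm{u}}\big|_0^T$, which is fixed by \eqref{detOMTEErhoendpoints} and hence contributes nothing to admissible variations. This leaves
\begin{equation*}
\mathcal{L} = \int_0^T\!\!\int_{\mathbb{R}^3}\rho^{\bm{u}}\Big\{\tfrac{1}{2}\|\bm{u}\|_2^2 - \tfrac{\partial\phi}{\partial t} - \langle\nabla_{\bm{x}^{\bm{u}}}\phi,\bm{\alpha}\odot\bm{f}+\bm{\beta}\odot\bm{u}\rangle\Big\}\differential\bm{x}^{\bm{u}}\differential t
\end{equation*}
up to fixed endpoint terms.

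\textbf{Step 2 (Variation in $\bm{u}$).} For a feasible perturbation $\delta\bm{u}$, setting the first variation of the pointwise integrand to zero and using $\rho^{\bm{u}}\geq 0$ yields the algebraic condition $\bm{u} - \bm{\beta}\odot\nabla_{\bm{x}^{\bm{u}}}\phi = 0$, giving \eqref{detOptimalControl}. Strict convexity of $r(\bm{u})=\tfrac{1}{2}\|\bm{u}\|_2^2$ ensures this pointwise minimizer is unique, consistent with Theorem~\ref{thm:existenceuniqueness}.

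\textbf{Step 3 (Variation in $\rho^{\bm{u}}$ gives the HJB).} A variation $\delta\rho^{\bm{u}}$ vanishing at $t=0,T$ forces the bracketed quantity to vanish pointwise:
\begin{equation*}
\tfrac{1}{2}\|\bm{u}\|_2^2 - \tfrac{\partial\phi}{\partial t} - \langle\nabla_{\bm{x}^{\bm{u}}}\phi,\bm{\alpha}\odot\bm{f}\rangle - \langle\nabla_{\bm{x}^{\bm{u}}}\phi,\bm{\beta}\odot\bm{u}\rangle = 0.
\end{equation*}
Substituting \eqref{detOptimalControl} and using that $\langle\nabla_{\bm{x}^{\bm{u}}}\phi,\bm{\beta}\odot(\bm{\beta}\odot\nabla_{\bm{x}^{\bm{u}}}\phi)\rangle = \|\bm{\beta}\odot\nabla_{\bm{x}^{\bm{u}}}\phi\|_2^2$ collapses the $\tfrac{1}{2}\|\bm{u}\|_2^2$ and the quadratic-in-$\nabla\phi$ terms into $-\tfrac{1}{2}\|\bm{\beta}\odot\nabla_{\bm{x}^{\bm{u}}}\phi\|_2^2$, yielding exactly \eqref{DetHJBPDE}. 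Finally, inserting \eqref{detOptimalControl} into the feasibility constraint \eqref{detOMTEErhodyn} produces the closed-loop continuity equation \eqref{detrho}, and \eqref{detBoundaryConditions} is inherited from \eqref{detOMTEErhoendpoints}.

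\textbf{Expected obstacle.} The routine calculus is transparent; the only point that needs care is the justification of the integration by parts and of the pointwise localization in Step~3, since \eqref{detOMTEErhodyn} is interpreted only weakly (as emphasized after \eqref{detOMTEErho}) and $\bm{u}\in\mathcal{U}$ is merely $\mathscr{F}_t$-adapted and square-integrable under $\mu^{\bm{u}}$. I would handle this by restricting variations $\delta\bm{u}$ and $\delta\rho^{\bm{u}}$ to smooth compactly supported functions in $\bm{x}^{\bm{u}}$, with $\delta\rho^{\bm{u}}$ vanishing at $t=0,T$, which is a dense enough class to localize and extract the Euler--Lagrange equations, and then appeal to the existence-uniqueness guarantee of Theorem~\ref{thm:existenceuniqueness} to identify the resulting critical pair as the global optimum.
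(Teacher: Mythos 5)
Your proposal is correct and follows essentially the same route as the paper's proof: adjoin the Liouville constraint with a $C^1$ multiplier $\phi$, integrate by parts discarding decaying boundary terms, minimize pointwise in $\bm{u}$ to get \eqref{detOptimalControl}, then force the remaining integrand to vanish for arbitrary feasible $\rho^{\bm{u}}$ to obtain the HJB PDE \eqref{DetHJBPDE}, and finally substitute the optimal control into the feasibility constraint to recover \eqref{detrho}--\eqref{detBoundaryConditions}. Your additional remarks on justifying the localization under the weak formulation go slightly beyond what the paper records, but the argument is the same.
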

\begin{proof}
Consider problem \eqref{detOMTEErho} with $q\equiv 0$, $r(\cdot)=\frac{1}{2}\|\cdot\|_2^2$, and its associated Lagrangian
\begin{align}
&\mathcal{L}\left(\rho^{\bm{u}}, \bm{u},\phi\right) := \!\!\int_{0}^{T}\!\!\!\!\int_{\mathbb{R}^{3}}\!\!\bigg\{\frac{1}{2}\|\bm{u}(\bm{x}^{\bm{u}},t)\|_2^2\:\rho^{\bm{u}}(\bm{x}^{\bm{u}},t)+\phi(\bm{x}^{\bm{u}},t)\nonumber\\
&\left(\!\dfrac{\partial\rho^{\bm{u}}}{\partial t} + \nabla_{\bm{x}^{\bm{u}}}\cdot\left(\rho^{\bm{u}}\left(\bm{\alpha}\odot\bm{f}(\bm{x}^{\bm{u}}) + \bm{\beta}\odot\bm{u}\right)\right)\!\right)\!\!\bigg\}\differential\bm{x}^{\bm{u}}\differential t
\label{LagrangianDetProblem}
\end{align}
where the Lagrange multiplier $\phi\in C^{1}\left(\mathbb{R}^{3};[0,T]\right)$. Let $\mathcal{P}_{0T}$ denote the family of PDF-valued curves over $[0,T]$ satisfying \eqref{detBoundaryConditions}. We perform unconstrained minimization of \eqref{LagrangianDetProblem} over $\mathcal{P}_{0T}\times\mathcal{U}$. 

Performing integration-by-parts of the right-hand-side of \eqref{LagrangianDetProblem} and assuming the limits for $\|\bm{x}^{\bm{u}}\|_2\rightarrow\infty$ are zero, we arrive at the unconstrained minimization of
\begin{align}
&\int_{0}^{T}\!\!\!\!\int_{\mathbb{R}^{3}}\!\!\left(\frac{1}{2}\|\bm{u}(\bm{x}^{\bm{u}},t)\|_2^2 -\dfrac{\partial\phi}{\partial t} - \langle\nabla_{\bm{x}^{\bm{u}}}\phi,\bm{\alpha}\odot\bm{f}(\bm{x}^{\bm{u}}) \right.\nonumber\\
&\left.\qquad\qquad\qquad\qquad+ \bm{\beta}\odot\bm{u}\rangle\right)\rho^{\bm{u}}(\bm{x}^{u},t)\:\differential\bm{x}^{\bm{u}}\:\differential t.
\label{LagrangianSimplified}
\end{align}
Pointwise minimization of the integrand in \eqref{LagrangianSimplified} w.r.t. $\bm{u}$ for each fixed PDF-valued curve in $\mathcal{P}_{0T}$, gives
$$\bm{u}^{\rm{opt}} = {\rm{diag}}\left(\bm{\beta}\right) \nabla_{\bm{x}^{\bm{u}}}\phi,$$
which is the same as \eqref{detOptimalControl}. Substituting the above expression for optimal control back in \eqref{LagrangianSimplified}, and equating the resulting expression to zero, we obtain the dynamic programming equation
\begin{align}
&\int_{0}^{T}\!\!\!\!\int_{\mathbb{R}^{3}}\!\!\left(-\dfrac{\partial\phi}{\partial t} - \frac{1}{2}\left\|\bm{\beta}\odot\nabla_{\bm{x}^{\bm{u}}}\phi\right\|_{2}^{2}-\langle\nabla_{\bm{x}^{\bm{u}}}\phi, \bm{\alpha}\odot\bm{f}(\bm{x}^{\bm{u}})\rangle\right)\nonumber\\
&\qquad\qquad\qquad\qquad\qquad\qquad\rho^{\bm{u}}(\bm{x}^{u},t)\:\differential\bm{x}^{\bm{u}}\:\differential t = 0.
\label{DPeqn}
\end{align}
For \eqref{DPeqn} to hold for any feasible $\rho^{\bm{u}}(\bm{x}^{u},t)$, the expression within the parentheses must vanish, which gives us the HJB PDE \eqref{DetHJBPDE}.

Since $\rho^{{\rm{opt}}}$ must satisfy the feasibility conditions \eqref{detOMTEErhodyn}-\eqref{detOMTEErhoendpoints}, hence substituting \eqref{detOptimalControl} therein yields \eqref{detrho}-\eqref{detBoundaryConditions}. 
\end{proof}

\begin{remark}
Equations \eqref{detOptimalControl} and \eqref{DetHJBPDE} generalize the condition \eqref{uopt} in classical dynamic OMT. The solution of the coupled system of HJB PDE \eqref{DetHJBPDE} and Liouville PDE \eqref{detrho} with boundary conditions \eqref{detBoundaryConditions} yields the optimal PDF $\rho^{\rm{opt}}$.
\end{remark}


\section{Uncontrolled PDF Evolution}\label{sec:uncontrolledPDF}Before delving into the approximate numerical solution for the optimally controlled PDF evolution, we briefly remark on the \emph{uncontrolled} PDF evolution. Specifically, we show next that the bilinear structure of the drift vector field in Eulerian dynamics \eqref{dOMTdyn} allows analytic handle on the uncontrolled PDFs, which will come in handy later for comparing the optimally controlled versus uncontrolled evolution of the stochastic states.

In the absence of control, we denote the uncontrolled state vector as $\bm{x}$, and the uncontrolled joint state PDF as $\rho$ (i.e., without the $\bm{u}$ superscripts). In that case, \eqref{detOMTEErhodyn} specializes to the uncontrolled Liouville PDE
\begin{align}
\dfrac{\partial\rho}{\partial t} + \nabla_{\bm{x}}\cdot\left(\rho\bm{\alpha}\odot\bm{f}(\bm{x})\right)=0.
\label{UncontrolledLiouville}
\end{align}
Since the drift in \eqref{dOMTdyn} is divergence free, we can explicitly solve \eqref{UncontrolledLiouville} with known initial condition $\rho(\bm{x},t=0)=\rho_0$, as
\begin{align}
\rho(\bm{x},t) = \rho_{0}\left(\bm{x}_0\left(\bm{x},t\right)\right)
\label{SolnUncontrolledLiouville}
\end{align}
where $\bm{x}_0\left(\bm{x},t\right)$ is the \emph{inverse flow map} associated with the unforced initial value problem: 
\begin{align}
\dot{\bm{x}}=\bm{\alpha}\odot\bm{f}(\bm{x}), \quad \bm{x}(t=0)=\bm{x}_0.
\label{UnforcedEulerODEivp}
\end{align}

For an asymmetric rigid body, we have $J_1 \neq J_2 \neq J_3$, and the corresponding flow map $\bm{x}\left(\bm{x}_0,t\right)$ for \eqref{UnforcedEulerODEivp} is given component-wise by (see e.g., \cite[equation (37.10)]{landau1976course})
\begin{subequations}
\begin{align}
x_1 &= \overline{x}_{10} \:{\rm{cn}}\left(\omega_p t + \lambda_1, \lambda_2\right),\\
x_2 &= \overline{x}_{20} \:{\rm{sn}}\left(\omega_p t + \lambda_1, \lambda_2\right),\\
x_3 &= \overline{x}_{30} \:{\rm{dn}}\left(\omega_p t + \lambda_1, \lambda_2\right),
\end{align}
\label{FlowMapAsymmetric}
\end{subequations}
where ${\rm{cn}}$ (elliptic cosine), ${\rm{sn}}$ (elliptic sine), ${\rm{dn}}$ (delta amplitude) are the Jacobi elliptic functions, and the variables $\overline{x}_{i0} \forall i\in\llbracket 3\rrbracket$, $\omega_p$, $\lambda_1,\lambda_2$ depend only on $\bm{x}_0$. In Sec. \ref{sec:numerical}, we numerically compute the inverse flow map $\bm{x}_0\left(\bm{x},t\right)$ associated with \eqref{FlowMapAsymmetric}.

For an axisymmetric rigid body, we have $J_1 = J_2 \neq J_3$, and the inverse flow map $\bm{x}_0\left(\bm{x},t\right)$ for \eqref{UnforcedEulerODEivp} can be computed component-wise analytically as
\begin{subequations}
\begin{align}
\gamma &:= \dfrac{x_2 - x_{1}\tan\left(\alpha_2x_3 t\right)}{x_1 + x_2 \tan\left(\alpha_2x_3 t\right)},\\
x_{10} &= \left(\dfrac{x_1^2 + x_2^2}{1 + \gamma^2}\right)^{\frac{1}{2}},\\
x_{20} &= \gamma\: x_{10} = \gamma\left(\dfrac{x_1^2 + x_2^2}{1 + \gamma^2}\right)^{\frac{1}{2}},\\
x_{30} &= x_{3},
\end{align}
\label{FlowMapAxisymmetric}
\end{subequations}
and thus \eqref{SolnUncontrolledLiouville} takes the form
$$\rho(x_1,x_2,x_3,t)=\rho_0\left(\left(\dfrac{x_1^2 + x_2^2}{1 + \gamma^2}\right)^{\frac{1}{2}},\gamma\left(\dfrac{x_1^2 + x_2^2}{1 + \gamma^2}\right)^{\frac{1}{2}},x_3\right).$$
We eschew the computation details for brevity.


\section{Minumum Energy Stochastic OMT-EE}\label{sec:stocOMTEE}
To facilitate the numerical solution of the dynamic OMT-EE discussed in Sec. \ref{subsec:detomteeminenergy}, i.e., the solution of \eqref{detOMTEErho} with $q\equiv 0$, $r(\cdot)\equiv\frac{1}{2}\|\cdot\|_2^2$, we perturb the sample path dynamics \eqref{dOMTdyn} with an additive process noise resulting in the It\^{o} stochastic differential equation (SDE):
\begin{equation}
\begin{aligned}
  \differential \bm{x}^{\bm{u}}=\! \left(\bm{\alpha}\odot\bm{f}(\bm{x}^{\bm{u}})  + \bm{\beta}\odot\bm{u}\right) \differential t + \sqrt{2 \delta}\:\differential\bm{w}, \; \delta > 0.
\end{aligned}
\label{SDE1}
\end{equation}
The $\bm{w}$ in \eqref{SDE1} denotes standard Wiener process in $\mathbb{R}^{3}$. Due to process noise, the first order Liouville PDE \eqref{detOMTEErhodyn} is replaced by the second order Fokker-Planck-Kolmogorov (FPK) PDE
\begin{align}
\dfrac{\partial\rho^{\bm{u}}}{\partial t} \!+\! \nabla_{\bm{x}^{\bm{u}}}\cdot\left(\rho^{\bm{u}}\left(\bm{\alpha}\odot\bm{f}(\bm{x}^{\bm{u}}) + \bm{\beta}\odot\bm{u}\right)\right) \!=\! \delta\Delta_{\bm{x}^{\bm{u}}}\rho^{\bm{u}},
\label{FPK}
\end{align}
which has both advection and diffusion. The corresponding necessary conditions of optimality are then transformed as follows.
\begin{theorem}\label{thm:FirstOrderOptimalitySBPEE}
(\textbf{Necessary conditions of optimality for minimum energy steering of angular velocity PDF with process noise}) Let $\delta>0$. The optimal tuple $(\rho^{\rm{opt}},\bm{u}^{\rm{opt}})$ solving problem \eqref{detOMTEErho} with $q(\cdot)\equiv 0$, $r=\frac{1}{2}\|\cdot\|_2^2$, and \eqref{detOMTEErhodyn} replaced by \eqref{FPK}, satisfies the following first order necessary conditions of optimality:
\begin{subequations}
\begin{align}
&\frac{\partial \phi}{\partial t}+\frac{1}{2}\left\|\bm{\beta}\odot\nabla_{\bm{x}^{\bm{u}}}\phi\right\|_{2}^{2}+\langle\nabla_{\bm{x}^{\bm{u}}}\phi, \bm{\alpha}\odot\bm{f}(\bm{x}^{\bm{u}})\rangle\nonumber\\
&\qquad\qquad\qquad\qquad\qquad\qquad\qquad=-\delta\Delta_{\bm{x}^{\bm{u}}}\phi, \label{StocHJBPDE}\\
&\dfrac{\partial\rho^{{\rm{opt}}}}{\partial t} \!+\! \nabla_{\bm{x}^{\bm{u}}}\cdot\left(\rho^{{\rm{opt}}}\!\left(\bm{\alpha}\odot\!\bm{f}(\bm{x}^{\bm{u}}) + \bm{\beta}^{2}\odot\!\nabla_{\bm{x}^{\bm{u}}}\phi\right)\right) \nonumber\\
&\qquad\qquad\qquad\qquad\qquad\qquad\qquad=\delta\Delta_{\bm{x}^{\bm{u}}}\rho^{{\rm{opt}}},\label{Stocrho}\\
&\rho^{{\rm{opt}}}(\bm{x}^{\bm{u}},t=0) = \rho_0, \quad \rho^{{\rm{opt}}}(\bm{x}^{\bm{u}},t=T) = \rho_T,\label{StocBoundaryConditions}\\
& \bm{u}^{\rm{opt}} = \bm{\beta}\odot \nabla_{\bm{x}^{\bm{u}}}\phi, \label{StocOptimalControl}
\end{align}
\label{StocOptimalityConditions}
\end{subequations}
where $\bm{\beta}^{2}$ denotes the vector element-wise square.
\end{theorem}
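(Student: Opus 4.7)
The plan is to mirror the Lagrangian-based derivation used in Theorem \ref{thm:FirstOrderOptimalityDynOMTEE}, with the only structural modification being the replacement of the Liouville PDE constraint by the FPK PDE \eqref{FPK}. Concretely, I would form the Lagrangian
\begin{align*}
\mathcal{L}\left(\rho^{\bm{u}}, \bm{u},\phi\right) &:= \int_{0}^{T}\!\!\!\!\int_{\mathbb{R}^{3}}\bigg\{\tfrac{1}{2}\|\bm{u}\|_2^2\:\rho^{\bm{u}} + \phi\bigg(\dfrac{\partial\rho^{\bm{u}}}{\partial t}\\
&\quad+ \nabla_{\bm{x}^{\bm{u}}}\!\cdot\!\left(\rho^{\bm{u}}(\bm{\alpha}\odot\bm{f} + \bm{\beta}\odot\bm{u})\right) - \delta\Delta_{\bm{x}^{\bm{u}}}\rho^{\bm{u}}\bigg)\bigg\}\,\differential\bm{x}^{\bm{u}}\,\differential t,
\end{align*}
with Lagrange multiplier $\phi\in C^{2}(\mathbb{R}^{3};[0,T])$, and unconstrained minimization performed over $\mathcal{P}_{0T}\times\mathcal{U}$ where $\mathcal{P}_{0T}$ is the family of PDF-valued curves satisfying \eqref{StocBoundaryConditions}.

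The first main step is to carry out integration by parts in $\bm{x}^{\bm{u}}$, assuming the usual decay of $\rho^{\bm{u}}$, $\phi$, and their first derivatives as $\|\bm{x}^{\bm{u}}\|_2\to\infty$. The advection term transfers a gradient onto $\phi$ exactly as in the deterministic proof, yielding the inner product $\langle\nabla_{\bm{x}^{\bm{u}}}\phi,\bm{\alpha}\odot\bm{f} + \bm{\beta}\odot\bm{u}\rangle$ against $\rho^{\bm{u}}$. The genuinely new step is handling the diffusion term $-\delta\phi\Delta_{\bm{x}^{\bm{u}}}\rho^{\bm{u}}$: integrating by parts twice moves both derivatives onto $\phi$ and produces $-\delta\rho^{\bm{u}}\Delta_{\bm{x}^{\bm{u}}}\phi$, so the reduced functional becomes
\begin{align*}
\int_{0}^{T}\!\!\!\!\int_{\mathbb{R}^{3}}\!\!\Big(\tfrac{1}{2}\|\bm{u}\|_2^2 - \tfrac{\partial\phi}{\partial t} - \langle\nabla_{\bm{x}^{\bm{u}}}\phi,\bm{\alpha}\odot\bm{f}+\bm{\beta}\odot\bm{u}\rangle - \delta\Delta_{\bm{x}^{\bm{u}}}\phi\Big)\rho^{\bm{u}}\,\differential\bm{x}^{\bm{u}}\,\differential t.
\end{align*}

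Next, I would perform the pointwise minimization in $\bm{u}$ for each fixed feasible $\rho^{\bm{u}}$: the integrand is strictly convex quadratic in $\bm{u}$, the stationarity condition $\bm{u} - (\bm{\beta}\odot\nabla_{\bm{x}^{\bm{u}}}\phi) = 0$ yields \eqref{StocOptimalControl}, and substitution back produces the term $-\tfrac{1}{2}\|\bm{\beta}\odot\nabla_{\bm{x}^{\bm{u}}}\phi\|_2^2$. Since the resulting integral must vanish against every admissible $\rho^{\bm{u}}\in\mathcal{P}_{0T}$, by a standard density argument the parenthesized expression must vanish pointwise, which is precisely the HJB PDE \eqref{StocHJBPDE}. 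Finally, enforcing feasibility \eqref{FPK}--\eqref{StocBoundaryConditions} under $\bm{u}=\bm{u}^{\rm{opt}}$ yields \eqref{Stocrho} and \eqref{StocBoundaryConditions}, completing the system \eqref{StocOptimalityConditions}.

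The main obstacle I anticipate is justifying the boundary terms at $\|\bm{x}^{\bm{u}}\|_2\to\infty$ when integrating by parts twice on the diffusion term; this requires mild tail decay on $\rho^{\bm{u}}$, $\nabla\rho^{\bm{u}}$, $\phi$, and $\nabla\phi$, which is reasonable under the second-moment constraint inherited from $\mathcal{P}_2(\mathbb{R}^3)$ and the admissibility of $\bm{u}\in\mathcal{U}$, but must be explicitly assumed or argued. A secondary subtlety is the density argument that passes from the integrated identity to the pointwise HJB PDE, since feasible $\rho^{\bm{u}}$ are constrained by fixed endpoint marginals; however, the variational family in the interior $t\in(0,T)$ is rich enough to separate test points, as is standard in Benamou--Brenier-type derivations, so the argument goes through as in Theorem \ref{thm:FirstOrderOptimalityDynOMTEE}.
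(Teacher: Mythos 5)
Your proposal is correct and follows essentially the same route the paper takes: it repeats the Lagrangian argument of Theorem \ref{thm:FirstOrderOptimalityDynOMTEE} mutatis mutandis, with the only new ingredient being the double integration by parts that converts $-\delta\phi\Delta_{\bm{x}^{\bm{u}}}\rho^{\bm{u}}$ into $-\delta\rho^{\bm{u}}\Delta_{\bm{x}^{\bm{u}}}\phi$, which correctly yields the extra $-\delta\Delta_{\bm{x}^{\bm{u}}}\phi$ term in \eqref{StocHJBPDE} and the diffusion term in \eqref{Stocrho}. The paper itself only sketches this ("same line of arguments, mutatis mutandis"), so your write-up is a faithful elaboration of its intended proof.
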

\begin{proof}
The proof follows the same line of arguments, mutatis mutandis, as in the proof of Theorem \ref{thm:FirstOrderOptimalityDynOMTEE}. See also \cite[proof of Prop. 1]{caluya2021wasserstein}, \cite[p. 275]{chen2021stochastic}.
\end{proof}
\begin{remark}
In the limit $\delta\downarrow 0$, the conditions \eqref{StocOptimalityConditions} reduce to the conditions \eqref{detOptimalityConditions}.
\end{remark}
\begin{remark}
The stochastic dynamic version of the OMT as considered in Theorem \ref{thm:FirstOrderOptimalitySBPEE}, is known in the literature as the generalized Schr\"{o}dinger bridge problem. This class of problems originated in the works of Erwin Scr\"{o}dinger \cite{schrodinger1931umkehrung,schrodinger1932theorie,wakolbinger1990schrodinger} and as such predates both the mathematical theory of stochastic processes and feedback control. The qualifier ``generalized" refers to the presence of prior (in our case, bilinear) drift which was not considered in Schr\"{o}dinger's original investigations \cite{schrodinger1931umkehrung,schrodinger1932theorie}. In recent years, Schr\"{o}dinger bridge problems and their connections to OMT have come to prominence in both control \cite{chen2015optimal,chen2016relation,bakshi2020schrodinger,caluya2021wasserstein,chen2021stochastic,caluya2021reflected,nodozi2022schrodinger} and machine learning \cite{de2021diffusion,wang2021deep,chen2021likelihood} communities.
\end{remark}
While \eqref{StocOptimalityConditions} is valid for arbitrary (not necessarily small) $\delta>0$, we are particularly interested in numerically solving \eqref{StocOptimalityConditions} for small $\delta$ since then, its solution is guaranteed \cite{mikami2004monge,leonard2012schrodinger} to approximate the solution of \eqref{detOptimalityConditions}. Indeed, the second order terms in \eqref{StocOptimalityConditions} contribute toward smoother numerical solutions, i.e., behave as stochastic dynamic regularization in a computational sense. This idea of leveraging the stochastic version of the OMT for approximate numerical solution of the corresponding deterministic dynamic OMT has appeared, e.g., in \cite{haddad2020prediction}.   


\section{Solving the Conditions of Optimality\\using A Modified Physics Informed Neural Network}\label{secPINN}
We propose leveraging recent advances in neural network-based computational frameworks to numerically solve \eqref{StocOptimalityConditions} for small $\delta>0$. Specifically, we propose training a modified physics informed neural network (PINN) \cite{raissi2019physics,lu2021deepxde} to numerically solve \eqref{StocHJBPDE}-\eqref{StocBoundaryConditions}, which is a system of two second order coupled PDEs together with the endpoint PDF boundary conditions.

We point out here that one can alternatively use the Hopf-Cole \cite{hopf1950partial,cole1951quasi} a.k.a. Fleming's logarithmic transform \cite{fleming1982logarithmic} to rewrite the system \eqref{StocHJBPDE}-\eqref{StocBoundaryConditions} into a system of forward-backward Kolmogorov PDEs with the unknowns being the so-called ``Schr\"{o}dinger factors". Unlike \eqref{StocOptimalityConditions}, these PDEs are coupled via nonlinear boundary conditions; see e.g., \cite[Sec. II, III.B]{caluya2021wasserstein}, \cite[Sec. 5]{chen2021stochastic}. However, the numerical solution of the resulting system is then contingent on the availability of two initial value problem solvers: one for the forward Kolmogorov PDE and another for the backward Komogorov PDE. While specialized solvers may be designed for certain classes of prior nonlinear drifts \cite{caluya2019gradient,caluya2021wasserstein}, in general one resorts to particle-based methods such as Monte Carlo and Feynman-Kac solvers. Directly solving the conditions of optimality by adapting PINNs, as pursued here, offers an alternative computational method.

As in \cite{nodozi2022physics}, our training of PINN in this work involves minimizing a sum of four losses: two losses encoding the equation errors in \eqref{StocHJBPDE}-\eqref{Stocrho}, and the other two encoding the boundary condition errors in \eqref{StocBoundaryConditions}. However, different from \cite{nodozi2022physics}, we penalize the boundary condition losses using the discrete version of the Sinkhorn divergence \eqref{DefSinkDiv} computed using contractive Sinkhorn iterations \cite{cuturi2013sinkhorn}.

Because the Sinkhorn iterations involve a sequence of differentiable linear operations, it is Pytorch auto-differentiable to support backpropagation. Compared to the computationally demanding task of differentiating through a large linear program involving the Wasserstein losses, the Sinkhorn losses for the endpoint boundary conditions offer approximate solutions with far less computational cost allowing us to train the PINN on nontrivial problems.

\begin{figure*}[t]
\centering
\includegraphics[width=.8\paperwidth]{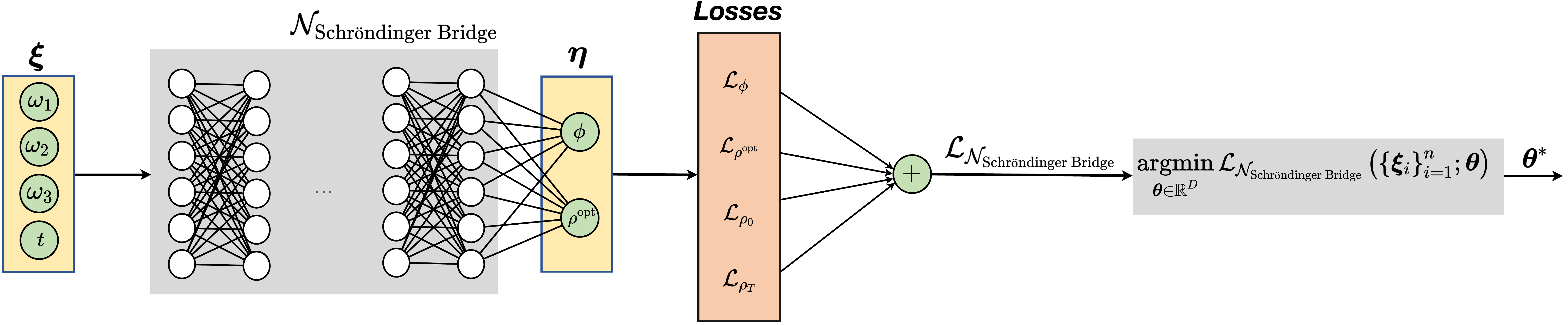}
\caption{{\small{The architecture of the PINN with $\bm{\xi}:=(\omega_1,\omega_2,\omega_3, t)$ as the input features. The PINN output $\bm{\eta}$ comprises of the value function and the  optimally controlled PDF, i.e., $\bm{\eta}:=(\phi,\rho^{\rm{opt}})$.}}}
\label{fig:PINNStructure}
\end{figure*}

 The proposed architecture of the PINN is shown in  Fig. \ref{fig:PINNStructure}. In our problem, $\bm{\xi}:=(\omega_1,\omega_2,\omega_3, t)$ comprises the features given to the PINN, and the PINN output $\bm{\eta}:=(\phi,\rho^{\rm{opt}})$. We parameterize the output of the fully connected feed-forward network via $\bm{\theta} \in \mathbb{R}^{D}$, i.e.,
\begin{align}
\bm{\eta}(\bm{\xi}) \approx \mathcal{N}_{\text{Schr\"{o}dinger Bridge}}(\bm{\xi} ; \bm{\theta}),
\end{align}
where $\mathcal{N}_{\text{Schr\"{o}dinger Bridge}}(\cdot;\bm{\theta})$ denotes the neural network approximant parameterized by $\bm{\theta}$, and $D$ is the dimension of the parameter space (i.e., the total number of to-be-trained weight, bias and scaling parameters for the network).

The overall loss function for the network denoted as $\mathcal{L}_{\mathcal{N}_{\text{Schr\"{o}dinger Bridge}}}$, consists of the sum of the equation error losses and the losses associated with the boundary conditions. Specifically, let $\mathcal{L}_{\phi}$ be the mean squared error (MSE) loss term for the HJB PDE \eqref{StocHJBPDE}, and let $\mathcal{L}_{\rho^{\rm{opt}}}$ be the MSE loss term for the FPK PDE \eqref{Stocrho}. For \eqref{StocBoundaryConditions}, we consider Sinkhorn regularized losses $\mathcal{L}_{\rho_{0}}$ and  $\mathcal{L}_{\rho_{T}}$. Then,
\begin{align}
\mathcal{L}_{\mathcal{N}_{\text{Schr\"{o}dinger Bridge}}}:=&\mathcal{L}_{\phi}+\mathcal{L}_{\rho^{\rm{opt}}}+\mathcal{L}_{\rho_{0}}+\mathcal{L}_{\rho_{T}},
\label{totalloss}
\end{align} 
where each summand loss term in \eqref{totalloss} is evaluated on a set of $n$ collocation points $\{\bm{\xi}_i\}_{i=1}^{n}$ in the domain of the feature space $\Omega:=\mathcal{X}\times[0,T]$ for some $\mathcal{X}\subset\mathbb{R}^{3}$, i.e., $\{\bm{\xi}_i\}_{i=1}^{n}\subset\Omega$.

We train the PINN with a Pytorch backend to compute the optimal training parameter
\begin{align}
\bm{\theta}^{*}:=\underset{\bm{\theta} \in \mathbb{R}^{D}}{\operatorname{argmin}}\: \mathcal{L}_{\mathcal{N}_{\text{Schr\"{o}dinger Bridge}}}(\{\bm{\xi}_i\}_{i=1}^{n}; \bm{\theta}).    
\label{NNtraining}    
\end{align}
In the next Section, we detail the simulation setup and report the numerical results.


\section{Numerical Simulations}\label{sec:numerical}
We consider the stochastic dynamics \eqref{SDE1} with $\delta=0.1$. The vector field $\bm{f}:\mathbb{R}^3 \mapsto \mathbb{R}^3$ is given in \eqref{DriftVectorField}. For the parameter vectors in \eqref{parameterVector}, we consider
$J_1=0.45$, $J_2=0.50$, and  $J_3=0.55$.

The control objective is to steer the prescribed joint PDF of the initial condition $\bm{x}(t=0) \sim \rho_0=\mathcal{N}\left(\bm{m}_0, \bm{\Sigma}_0\right)$ to the prescribed joint PDF of the terminal condition $\bm{x}(t=T) \sim \rho_T= \mathcal{N}\left(\bm{m}_T, \bm{\Sigma}_T\right)$ over $t \in[0,T]$, subject to \eqref{SDE1}, while minimizing \eqref{detOMTEErhoobj} with $q(\cdot)\equiv 0$, $r=\frac{1}{2}\|\cdot\|_2^2$. Here, we fix the final time $T = 4$ s, and 
$$
\begin{aligned}
\bm{m}_0=(2,2,2)^{\top},~ \bm{m}_T=(0,0,0)^{\top},~\bm{\Sigma}_0=\bm{\Sigma}_{T}=0.5\bm{I}_{3}.
\end{aligned}
$$
Due to the prior nonlinear drift, the optimally controlled transient joint state PDFs are expected to be non-Gaussian even when the endpoint joint state PDFs are Gaussian.

For training the $\mathcal{N}_{\text{Schr\"{o}dinger Bridge}}$, we use a network with 3 hidden layers with 70 neurons in each layer. The activation functions are chosen to be $\tanh(\cdot)$. The input-output structure of the network is as explained in Sec. \ref{secPINN}. 

We fix the state-time collocation domain $\Omega=\mathcal{X}\times [0,T] = [-5,5]^{3}\times [0,4]$. We trained the PINN for 80,000 epochs with the Adam optimizer \cite{kingma2014adam} and with a learning rate $10^{-3}$. We used $n=100,000$ pseudorandom samples (using Hammersley distribution) between the endpoint boundary conditions at $t=0$ and $t=T$ for the training. Additionally, to satisfy compute constraints, we uniformly randomly sampled 35,000 samples every 40,000 epochs. For computing the Sinkhorn losses at the endpoint boundary conditions, we use the entropic regularization parameter (see \eqref{DefSinkDiv}) $\varepsilon=0.1$.

Fig. \ref{fig:OptimallyControlledStateSamplePaths} depicts fifty optimally controlled state sample paths for this simulation. These sample paths are obtained via closed-loop simulation with the optimal control policy $\bm{u}^{{\rm{opt}}}$ resulting from the training of the PINN. 

Fig. \ref{fig.Marginal PDFs} shows the snaphsots of the univariate marginal PDFs under optimal control and the same without control, for the aforesaid numerical simulation. Following Sec. \ref{sec:uncontrolledPDF}, computing the uncontrolled PDFs for the deterministic dynamics (i.e., $\delta=0$) requires inverting \eqref{FlowMapAsymmetric}. We used the method-of-characteristics \cite{halder2011dispersion} to solve the corresponding unforced Liouville PDE, thereby obtaining the uncontrolled joint PDF snapshots. The marginals $\rho_i^{{\rm{unc}}}$, $i\in\llbracket 3\rrbracket$, in Fig. \ref{fig.Marginal PDFs} were obtained by numerically integrating these uncontrolled joints.



\begin{figure}[t]
\centering
\includegraphics[width=\linewidth]{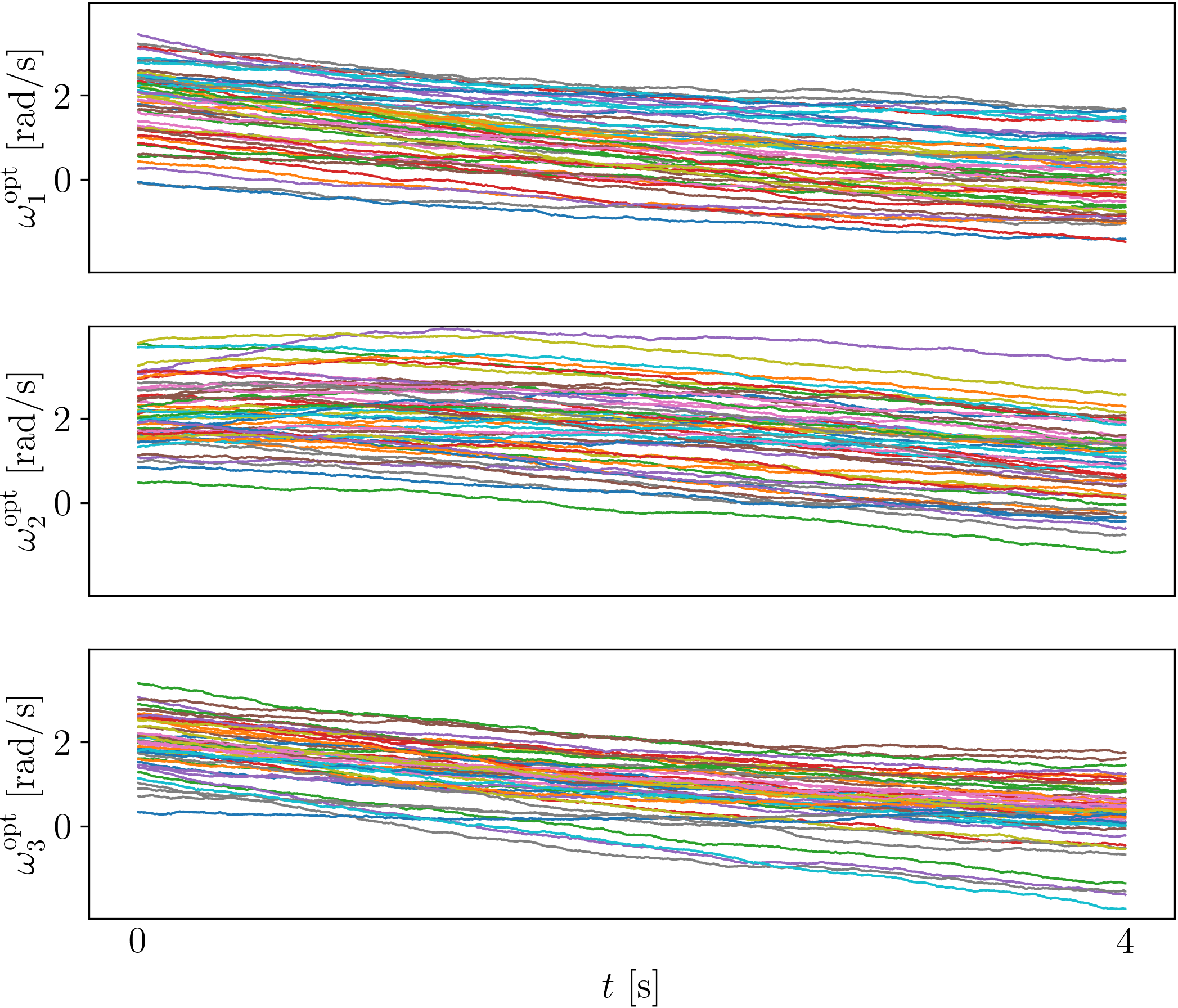}
\caption{{\small{Fifty optimally controlled closed-loop state sample paths $\omega_{i}^{{\rm{opt}}}(t)$, $i\in\llbracket 3\rrbracket$, for the simulation reported in Sec. \ref{sec:numerical}.}}}
\label{fig:OptimallyControlledStateSamplePaths}
\end{figure}
\begin{figure*}[htbp!]
    \centering
    \subfloat{{\includegraphics[width=5.45cm]{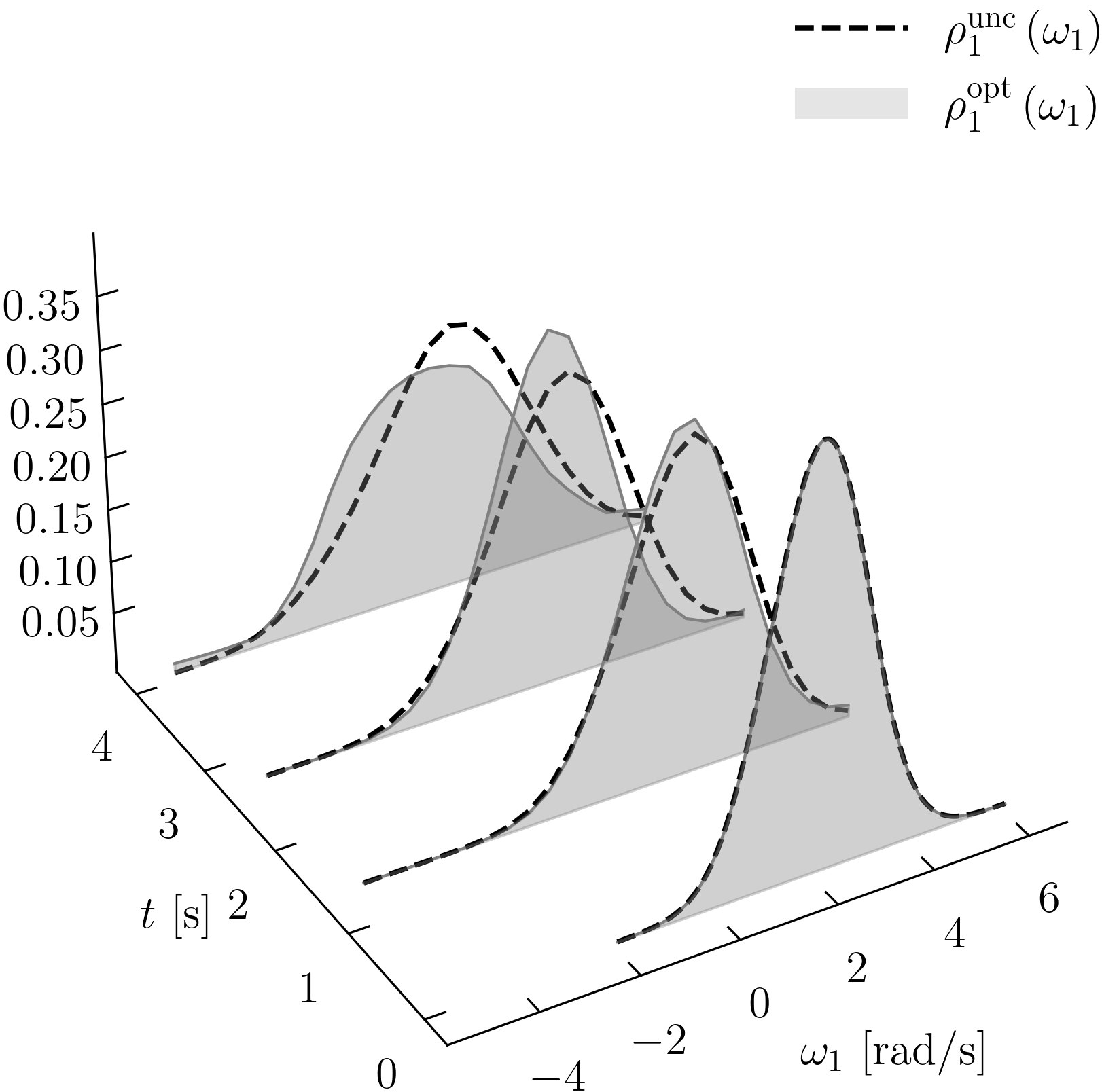}}}%
    ~
    \subfloat{{\includegraphics[width=5.45cm]{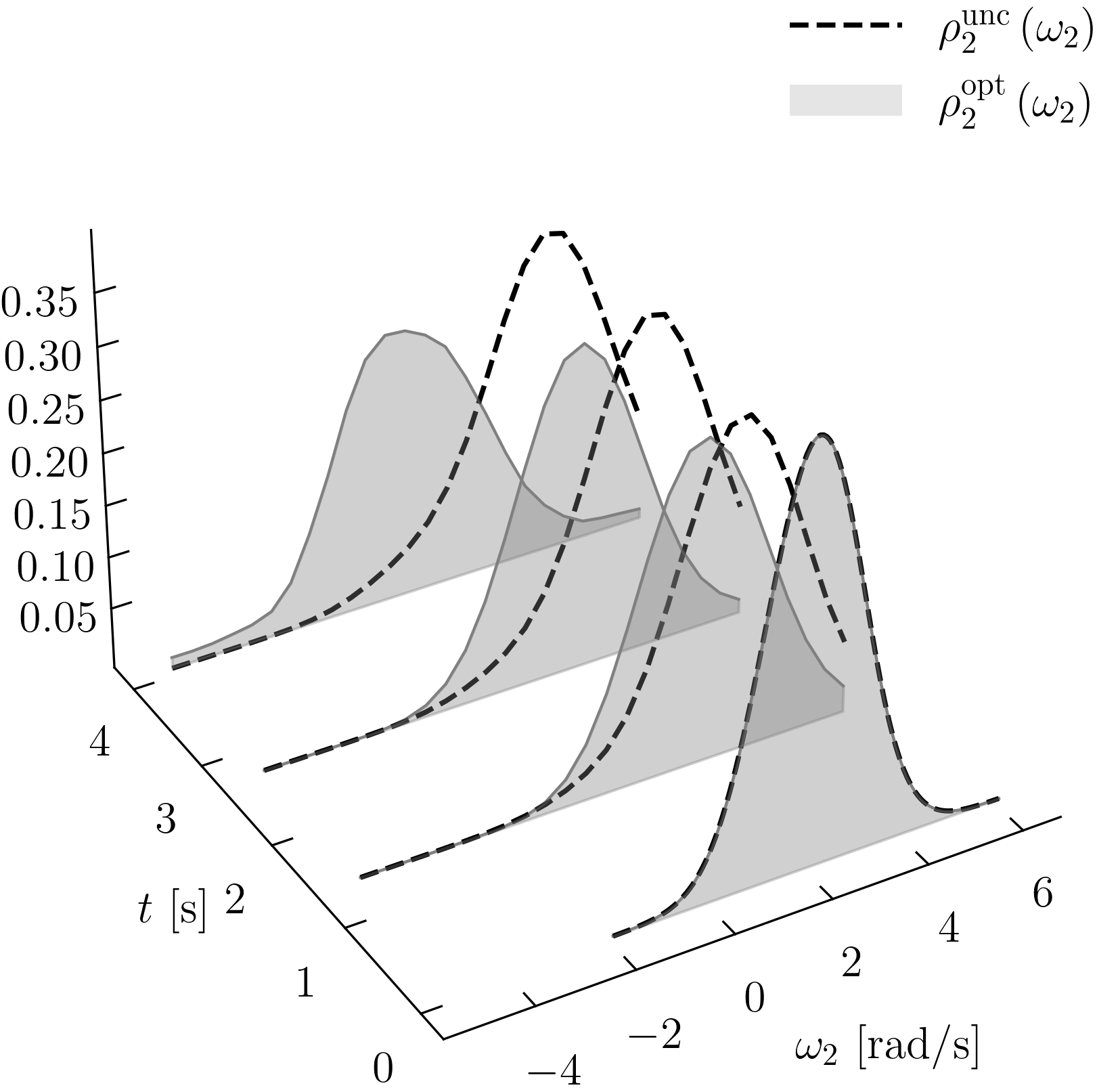}}}%
    ~
    \subfloat{{\includegraphics[width=5.45cm]{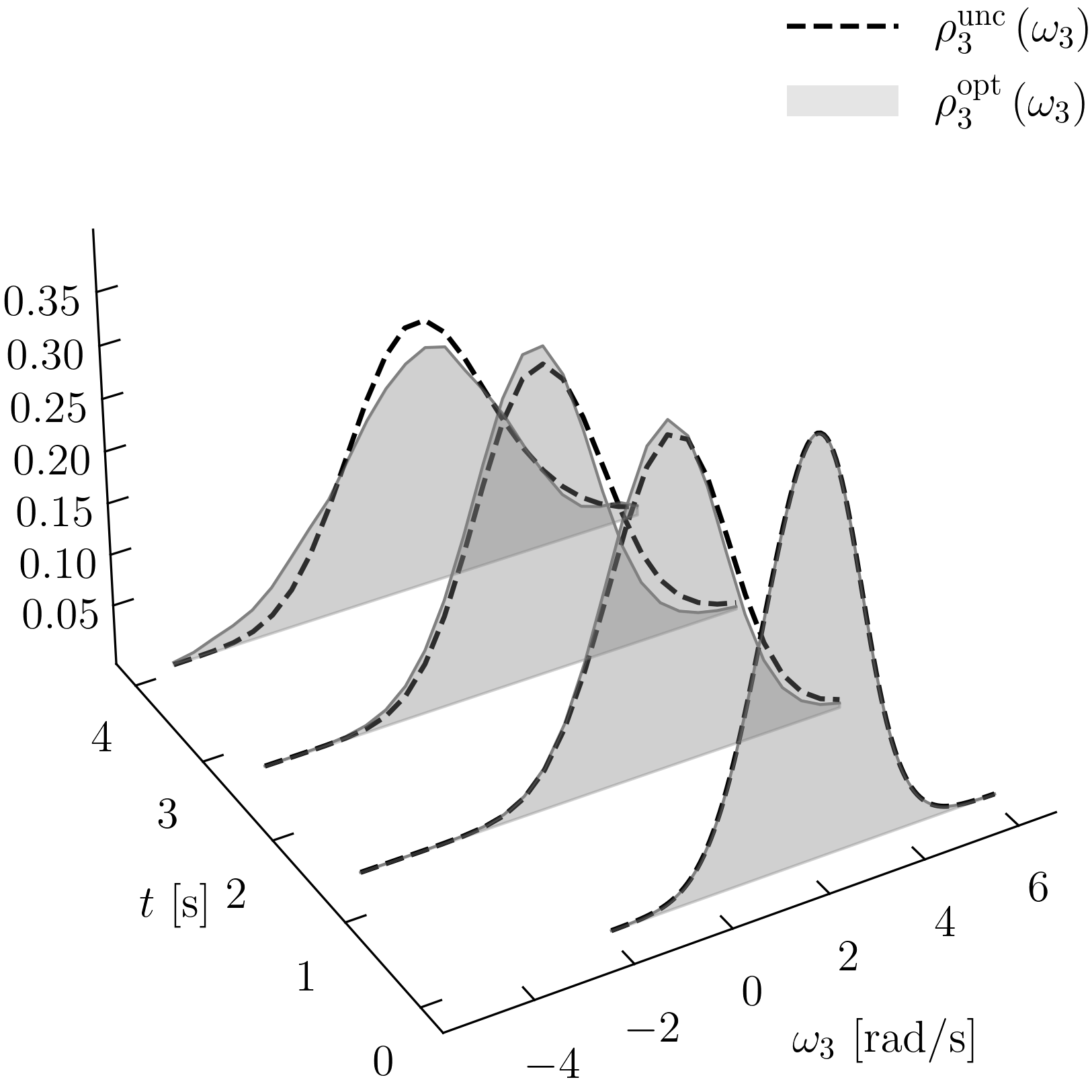}}}%
\caption{%
{\small{Four snapshots for the \emph{optimally controlled} univariate marginals $\rho_{i}^{{\rm{opt}}}$ and the corresponding \emph{uncontrolled} univariate marginals $\rho_{i}^{{\rm{unc}}}$, $i\in\llbracket 3\rrbracket$, for the numerical simulation in Sec. \ref{sec:numerical}.}}}%
\label{fig.Marginal PDFs}
\vspace*{-0.24in}
\end{figure*}







\section{Conclusions}\label{sec:conclusions}
We considered the optimal mass transport problem over the Euler equation governing the angular velocity dynamics. We studied both the deterministic and stochastic dynamic variants of this problem and explained their connections with the theory of classical optimal mass transport. We detailed the existence-uniqueness of solution as well as the necessary conditions of optimality. We provided an illustrative numerical example to demonstrate the solution of the optimal control synthesis using a modified physics informed neural network. The modification we propose involves differentiating through the Sinkhorn losses minimizing the boundary condition errors in the endpoint joint PDFs.

\bibliographystyle{IEEEtran}
\bibliography{References.bib}

\end{document}